
\documentclass[12pt]{amsart}

\usepackage{amsmath,
amsthm,amsfonts,amssymb, latexsym,enumerate,url}

\usepackage[colorlinks=true, 
linkcolor = blue,
citecolor=black,
urlcolor=black,
]{hyperref}

\headheight=7pt
\textheight=574pt
\textwidth=432pt
\topmargin=14pt
\oddsidemargin=18pt
\evensidemargin=18pt

\usepackage{microtype}

\usepackage[backend=biber,
sorting=nyt,
style=numeric,
maxnames=99,
minnames=99,
]{biblatex}
\addbibresource{smallG1.bib}
\DeclareFieldFormat{prefixnumber}{\mkbibbold{#1}}
\DeclareFieldFormat{labelnumber}{\mkbibbold{#1}}
\DeclareFieldFormat[article]{number}{\mkbibparens{#1}}
\DeclareFieldFormat{addendum}{\mkbibparens{#1}}

\renewbibmacro*{note+pages}{%
  \printfield{note}
  \setunit{\addspace}
  \printfield{pages}
}

\DeclareFieldFormat{pages}{#1} 
\DeclareFieldFormat{vol}{#1}
\renewbibmacro{in:}{}

\DeclareFieldFormat[article]{title}{\textnormal{#1}}  

\DeclareFieldFormat[article]{volume}{\textbf{#1}}

\makeatletter
\renewenvironment{proof}[1][\proofname]{\par\pushQED{\qed}%
	\normalfont \topsep6\p@\@plus6\p@\relax
	\trivlist
	\item\relax
		{\bfseries
	#1\@addpunct{.}}\hspace\labelsep\ignorespaces}{%
	\popQED\endtrivlist\@endpefalse
}
\makeatother


\renewcommand{\bf}{\textbf}

\newcommand{\te}[1]{\text{#1}}


\newcommand{\A}{\mathsf{A}}
\newcommand{\SY}{\mathsf{S}}
\newcommand{\CY}{\mathsf{C}}

\renewcommand{\bar}{\overline}

\newcommand{\ls}{<}
\newcommand{\g}{>}

\newcommand{\sm}{\setminus}
\newcommand{\Span}[1]{\langle #1\rangle}

\newcommand{\nr}[1][G]{\textnormal{\bf{N}}_{#1}}
\newcommand{\cn}[1][G]{\textnormal{\bf{C}}_{#1}}
\newcommand{\op}{\textnormal{\bf{O}}}
\newcommand{\zn}{\textnormal{\bf{Z}}}

\newcommand{\IRR}{\textnormal{Irr}}

\newcommand{\GL}{\textnormal{GL}}

\newcommand{\CD}{\textnormal{cd}}

\newcommand{\PSL}{\textnormal{PSL}}

\newcommand{\Mod}[1]{\ \mathrm{mod}\ #1}

\newcommand{\syl}[1][p]{\textnormal{Syl}_{#1}}

\renewcommand{\subset}{\subseteq}

\begin{document}

\theoremstyle{plain}

\newtheorem{thm}{Theorem}[section]
\newtheorem{lem}[thm]{Lemma}
\newtheorem{conj}[thm]{Conjecture}
\newtheorem{pro}[thm]{Proposition}
\newtheorem{cor}[thm]{Corollary}
\newtheorem{que}[thm]{Question}
\newtheorem{rem}[thm]{Remark}
\newtheorem{defi}[thm]{Definition}
\newtheorem{hyp}[thm]{Hypothesis}

\newtheorem*{thmA}{THEOREM A}
\newtheorem*{thmB}{THEOREM B}
\newtheorem*{corC}{COROLLARY C}

\newtheorem*{thmC}{THEOREM C}
\newtheorem*{conjA}{CONJECTURE A}
\newtheorem*{conjB}{CONJECTURE B}
\newtheorem*{conjC}{CONJECTURE C}

\newtheorem*{thmAcl}{Main Theorem$^{*}$}
\newtheorem*{thmBcl}{Theorem B$^{*}$}

\numberwithin{equation}{section}

\marginparsep-0.5cm

\renewcommand{\thefootnote}{\fnsymbol{footnote}}
\footnotesep6.5pt

\title{Minimal-order groups with an irreducible character of degree \(p\) or \(p^2\)}

\author{Asier Arranz }
\email{aarranz97@alumno.uned.es}

\subjclass[2020]{20C15}

\begin{abstract}
We characterize the finite groups of minimal order that admit an irreducible complex character of degree \(p\) or \(p^2\), where \(p\) is a prime. 
\end{abstract}

\thanks{This research was conducted as part of my undergraduate thesis at UNED directed by G. Navarro, whom I thank for their guidance and useful comments.}

\maketitle

\section{Introduction}

G. Navarro has proposed us the following question:
Given an integer \(n\g 1\), what is the smallest number 
\(m=g(n)\) such that there exists a finite group of order \(m\)
with an irreducible complex character of degree \(n\)? In fact, some results about this question have appeared in \cite{HI} (see also \cite{SE}). 

In full generality, this problem appears too difficult to admit a complete answer.

Suppose for instance that \(n=p\) is a prime. Then an extraspecial group of order \(p^3\) has an irreducible character of degree \(p\). (Taking direct products, we see that any number $n$ is the irreducible character degree of some finite group.) Using Dirichlet’s theorem on arithmetic progressions, let \(q\) be the smallest prime such that \(q\equiv 1\Mod{p}\). Then the Frobenius group of order \(pq\) has an irreducible character of degree \(p\). An open conjecture by H. Kanold \cite{K64}, asserts that \(q\ls p^2\). Hence, it seems that we have to choose \(pq\) over \(p^3\).  But there is more: the projective special linear group \(\PSL_2(p)\) has an irreducible
character of degree \(p\) and it is of order \(|\PSL_2(p)| = p(p^2-1)/(2,p-1)\). However, it is not known when \(q<(p^2-1)/(2,p-1)\). 

By using \(\mathsf{GAP}\) \cite{GAP} or elementary methods, we quickly check the first few cases.
For instance, \(g(2)=6\) with \(G=\SY_3\), \(g(3)=12\) with \(G=\A_4\), \(g(4)=20\) with \(G=\CY_5\rtimes \CY_4\), \(g(5)=55\) with \(G=\CY_{11}\rtimes \CY_5\), \(g(6)=42\) with \(G=\CY_7\rtimes \CY_6\), \(g(7)=56\) with \((\CY_2)^3\rtimes \CY_7\), \(g(8)=72\) with either \(G=(\CY_3)^2 \rtimes D_8\) or \(G=(\CY_3)^2 \rtimes Q_8\), and \(g(9)=144\) with \(G=\A_4\times \A_4\).  

In this note, we partially answer Navarro's question by characterizing the minimal-order groups with an irreducible character of degree $n$, for the first cases where $n$ is a prime or the square of a prime. Namely, we prove the following.

\begin{thmA} \label{thmA:theoA}
Let $G$ be a finite group of minimal order with an irreducible character of prime degree $p$. Then exactly one of the following occurs.
\begin{enumerate}[(a)]
\item \(G=\PSL_2(p)\).
\item \(G\) is a Frobenius group of the form \((\CY_q)^m\rtimes \CY_p\), where \(q^m\) is the least prime-power such that \(p\) divides \(q^m-1\). 
\end{enumerate}
\end{thmA}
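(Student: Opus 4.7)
My plan is to reduce to the faithful case and then split on whether $G$ is solvable. By minimality, $\chi$ must be faithful---otherwise $G/\ker\chi$ is a strictly smaller group with an irreducible character of degree $p$. Both candidates in (a) and (b) carry an irreducible character of degree $p$ (the Steinberg character for $\PSL_2(p)$, and the induction of a non-trivial linear character of the elementary abelian normal subgroup for the Frobenius group), so one has the a priori upper bound $|G|\leq \min\{pq^m,\,|\PSL_2(p)|\}$.

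Assume first that $G$ is solvable and pick a minimal normal subgroup $N$, which is elementary abelian of order $r^k$ for some prime $r$. By Clifford's theorem, $\chi_N=e(\lambda_1+\cdots+\lambda_t)$ with $et=p$. In the non-homogeneous case $t=p$, $e=1$, the $G$-orbit of $\lambda_1$ has size $p$, so $p\mid|\AU(N)|=|\GL_k(\bb{F}_r)|$; since $q^m$ is the least prime power congruent to $1\pmod p$, this forces $r^k\geq q^m$ (and rules out $r=p$). With $I=I_G(\lambda_1)$ and $\chi=\psi^G$ for a linear $\psi\in\IRR(I\mid\lambda_1)$, $|G|=p|I|\geq p|N|\geq pq^m$, and the a priori bound forces $I=N$, $|N|=q^m$, $[G:N]=p$. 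The action of $G/N\cong \CY_p$ on $N$ must be faithful (otherwise $N\subseteq \zn(G)$, $G/\zn(G)$ is cyclic, and $G$ is abelian, contradicting $\chi(1)=p$), and a cyclic group of prime order acting faithfully and irreducibly acts fixed-point-freely, so $G$ is the Frobenius group of (b).

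The homogeneous case $t=1$, $e=p$ is what has to be excluded for minimal $G$: here $\lambda=\lambda_1$ is $G$-invariant with trivial kernel, so $N$ is cyclic of prime order $r$ with $N\subseteq \zn(G)$. If $N\not\subseteq G'$, then $\lambda$ extends to a linear character $\tilde\lambda$ of $G$ and $\chi\otimes\tilde\lambda^{-1}$ descends to an irreducible character of $G/N$ of degree $p$, contradicting minimality. The remaining possibility $N\subseteq \zn(G)\cap G'$ is ruled out by combining the faithfulness bound $[G:\zn(G)]\geq p^2$, the upper bound $|G|\leq pq^m$, and a Sylow analysis pinning down $|G|=p^2r$ with Sylow $p$-subgroup normal and abelian, after which It\^o's theorem gives $p=\chi(1)\mid r$, a contradiction. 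If instead $G$ is not solvable, then $|G|\leq |\PSL_2(p)|$ and, since $\chi$ is faithful, $G$ embeds into $\GL_p(\C)$. I would invoke the classification of finite simple groups together with inspection of character degrees of non-abelian simple groups of order at most $|\PSL_2(p)|$ to conclude that $\PSL_2(p)$ is the only such group carrying an irreducible character of degree $p$, and a short analysis of its Schur multiplier and outer automorphism group finishes with $G=\PSL_2(p)$.

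The main obstacle is the ``central ramified'' subcase $N\subseteq\zn(G)\cap G'$ in the solvable case, where plain order comparisons are not quite tight enough and one has to exploit the Sylow structure of $G$ together with It\^o's theorem; the non-solvable case, while dependent on the classification of finite simple groups, is structurally the cleaner of the two.
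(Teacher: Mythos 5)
Your route (Clifford analysis of $\chi_N$ for a \emph{minimal} normal subgroup $N$, split by homogeneity) is genuinely different from the paper's, which passes to a \emph{maximal} normal subgroup $M$: since $p$ must divide $|G:M|$ and $|G|\leqslant|\PSL_2(p)|$, Brauer's theorem that $\PSL_2(p)$ is the smallest nonabelian simple group of order divisible by $p$ forces $G/M\cong\CY_p$ and $M$ abelian, after which a Sylow count gives $|P|=p$ and Fitting's decomposition produces a Frobenius subgroup $NP$ equal to $G$ by minimality --- so the ``central ramified'' configuration never has to be confronted. Your unramified case is essentially sound (though the parenthetical dismissal of $r=p$ is not a consequence of the sentence it sits in: when $r=p$ the divisibility $p\mid|\GL_k(\bb{F}_p)|$ is automatic and gives no lower bound on $r^k$; you need $|G|\leqslant|\PSL_2(p)|<p^3$ to kill $k\geqslant 2$ and $|\AU(\CY_p)|=p-1$ to kill $k=1$). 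The genuine gap is the subcase $N\subseteq\zn(G)\cap G'$, which you yourself flag as the main obstacle: the claim that ``a Sylow analysis pins down $|G|=p^2r$ with Sylow $p$-subgroup normal and abelian'' does not follow from the ingredients you cite. The bound $[G:\zn(G)]\geqslant p^2$ does not imply $p^2$ divides $|G|$ (the index could be $ps$ with $s>p$), nothing you state forces the Sylow $p$-subgroup to be normal or abelian, and nothing limits $|G|$ to exactly the three prime factors $p,p,r$. Without this, the final appeal to It\^o's theorem has no foundation, and the solvable half of the argument is incomplete.

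The nonsolvable case is also only a sketch: a nonsolvable $G$ of minimal order need not be simple, quasisimple or almost simple, so before any ``inspection of character degrees of simple groups of order at most $|\PSL_2(p)|$'' can apply you need a reduction (a chief series, the generalized Fitting subgroup, or again a maximal normal subgroup); moreover that inspection is not a uniform statement over all primes $p$ and would itself require an argument. The paper gets by with the single input above (Brauer's minimality of $|\PSL_2(p)|$), which settles the simple case in one line and shows every nonsimple candidate has a cyclic top quotient of order $p$. Replacing your minimal normal subgroup by a maximal one would collapse both of your problematic cases at once.
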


A computational check shows that, for primes \(p\ls 4000\), case (a) of Theorem~\hyperref[thmA:theoA]{A} occurs only for \(p=19\). We are not aware of any more examples where case (a) holds.

\begin{thmB} \label{thmB:theoB}
Let $G$ be a finite group of minimal order with an irreducible chararacter of degree \(p^2\), where $p$ is a prime. Then exactly one of the following occurs.
\begin{enumerate}[(a)]
\item $G$ is any \(p\te{-group}\) of order \(p^5\) with an irreducible character of degree \(p^2\). 
\item \(G\) is a Frobenius group of the form \((\CY_q)^m\rtimes \CY_{p^2}\), where \(q^m\) is the least prime-power such that \(p^2\) divides \(q^m-1\). 
\item $G$ is a direct product of two Frobenius groups of the form \((\CY_q)^m\rtimes \CY_p\), where \(q^m\) is the least prime-power such that \(p\) divides \(q^m-1\). 
\end{enumerate}
\end{thmB}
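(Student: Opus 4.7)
The plan is to fix a minimal-order $G$ admitting some $\chi \in \IRR(G)$ of degree $p^2$ and analyze its structure, using Theorem~A as a key input. A routine minimality argument first reduces to $\ker\chi = 1$: otherwise $G/\ker\chi$ would be a strictly smaller group carrying $\chi$ as a faithful character of degree $p^2$. In particular $\zn(G)$ is cyclic, and the subsequent analysis splits according to whether $G$ decomposes as a nontrivial direct product.

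Suppose first that $G = G_1 \times G_2$ with both factors nontrivial. Then $\chi = \chi_1 \times \chi_2$ with $\chi_i \in \IRR(G_i)$ faithful and $\chi_1(1)\chi_2(1) = p^2$. Minimality forbids $\chi_i(1) = 1$ (else $G_i$ is cyclic and the other factor would itself be a smaller example), so $\chi_1(1) = \chi_2(1) = p$. By minimality applied factorwise, $|G_i|$ equals the minimum order of a group with an irreducible character of degree $p$, so Theorem~A determines each $G_i$ up to isomorphism. A direct comparison of $|\PSL_2(p)|$ against the Frobenius order $q^m p$, combined with the $p^5$ bound from case (a), shows that in any regime where this direct-product case actually realizes the overall minimum, both factors must be the Frobenius group $(\CY_q)^m \rtimes \CY_p$; any mixed or $\PSL_2 \times \PSL_2$ choice would be beaten either by replacing a $\PSL_2(p)$ factor with the Frobenius one, or by a $p$-group of order $p^5$. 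This yields case (c).

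If instead $G$ is directly indecomposable, I first dispose of the $p$-group subcase: the classical bound $\chi(1)^2 \leq [G : \zn(G)]$ combined with $|\zn(G)| \geq p$ from faithfulness gives $|G| \geq p^5$, and this lower bound is realized by suitable $p$-groups of order $p^5$, yielding case (a). If $G$ is not a $p$-group, I would analyze a minimal normal subgroup $N$ via Clifford theory. A preliminary order comparison against the candidate non-abelian simple groups carrying irreducible characters of degree dividing $p^2$ rules out non-abelian chief factors, so $N$ is elementary abelian of order $q^m$; minimality together with faithfulness forces $q \neq p$. Next one shows $\chi = \lambda^G$ for a linear $\lambda \in \IRR(N)$, which forces $[G : N] = p^2$, and that $G/N$ acts fixed-point-freely on $N$ (else quotienting by a nontrivial $G/N$-fixed subspace produces a smaller group still carrying a character of degree $p^2$). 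Hence $G$ is a Frobenius group with complement of order $p^2$; the classification of Frobenius complements (whose Sylow subgroups are cyclic or generalized quaternion) rules out $\CY_p \times \CY_p$ and the Klein four-group, leaving $G/N \cong \CY_{p^2}$. Finally, minimality of $|N| = q^m$ translates exactly into $q^m$ being the least prime power with $p^2 \mid q^m - 1$, giving case (b).

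The hardest step is the indecomposable non-$p$-group case: ruling out non-abelian chief factors, and establishing the monomial/Frobenius structure of $\chi$. I expect the latter to proceed by examining the possible values $[G : I_G(\theta)] \in \{1, p, p^2\}$ for $\theta$ a constituent of $\chi_N$, invoking Theorem~A decisively to control the intermediate sub-case where an inertia subgroup itself has an irreducible character of degree $p$.
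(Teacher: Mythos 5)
There are two genuine gaps, both located in your directly indecomposable, non-$p$-group case, which is where essentially all of the difficulty of Theorem~\hyperref[thmB:theoB]{B} actually lives. First, the claim that ``a preliminary order comparison \ldots rules out non-abelian chief factors'' does not hold up. If $N=S^r$ is a nonabelian minimal normal subgroup and $\theta\in\IRR(N)$ lies under $\chi$, then $\theta(1)\in\{p,p^2\}$, and the elementary lower bound this yields on $|S|$ is only of the order of $|\PSL_2(p)|=p(p^2-1)/2$; combined with the fact that $\chi(1)/\theta(1)$ divides $|G:N|$, one gets roughly $|G|\g p^4/2$, which does not contradict the working bound $|G|\ls p^5$. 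The paper has to deploy the full classification machinery here: existence of $p$-defect zero characters in simple groups, bounds on $|\te{Out}(S)|$, and the classification of prime-power degree irreducible characters of quasisimple groups, followed by case-by-case order estimates for $\PSL_2(q)$, $\PSL_n(q)$, $\te{PSU}_n(q)$ and $\te{PSp}_{2n}(q)$. A generic order comparison cannot replace this.

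Second, in the solvable indecomposable case your sketch jumps from ``$N$ elementary abelian'' to ``$\chi=\lambda^G$ with $[G:N]=p^2$,'' but Clifford theory only gives $\chi_N=e\sum_{i=1}^{t}\lambda_i$ with $et=p^2$, and even when $t=p^2$ the induction is from $I_G(\lambda)$, which need not equal $N$. The sub-cases you defer to ``invoking Theorem~A decisively'' are exactly where the real work lies: a group $G$ with a normal subgroup $K$ such that $K$ and $G/K$ are both minimal groups with a degree-$p$ character is not obviously a direct product, and showing that it must be one (so that it falls under your already-treated decomposable case rather than surviving as an indecomposable minimal example) is the content of the paper's Lemma~\ref{lem:Bc}, proved via normal $p$-complements, the Hall--Higman lemma, Ito's theorem, Theorem 12.11 of \cite{CTFG} and a Frattini argument; a further separate vanishing argument is needed to kill the configuration where $G/K$ is extraspecial. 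None of this is routine. Your direct-product reduction and the $p$-group case are essentially correct (the former matching the paper's use of Theorem~\hyperref[thmA:theoA]{A} and the observation that $|\PSL_2(p)|^2\g p^5$), and you should also supply the existence of the Frobenius groups in cases (b) and (c), which the paper obtains from an element of order $p^2$ or $p$ in $\GL_m(q)$ together with Fitting's theorem; but as it stands the two gaps above leave the theorem unproved.
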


Somewhat similarly, for primes \(p\leqslant 71\) case (a) of Theorem~\hyperref[thmB:theoB]{B} occurs only for \(p=19\). Again, we are not aware of any more examples where case (a) holds. 
\medskip

Of course, if \(G\) is a group of minimal order with an irreducible character \(\chi\in \IRR(G)\) of degree \(\chi(1)=n\), then \(\chi\) is faithful by the minimality of $|G|$. Also, the restriction \(\chi_H\) of \(\chi\) to a proper subgroup \(H\ls G\) is not irreducible. Thus $G$ is minimally irreducible in the sense of \cite{V03}. These groups have also been studied in \cite{S73, K781, K782, K79}, although we shall not use any of their results.

\section{Preliminaries} \label{prel}

We begin with a few elementary observations that will be used frequently, often without explicit reference. The first one is a consequence of Problem 2.8 of \cite{CTFG}

\begin{lem}\label{lem:faithabe}
Let $G$ be a group of minimal order with an irreducible character \(\chi\in \IRR(G)\) of degree \(\chi(1)=n\) and let \(H\subset G\). If every irreducible constituent of \(\chi_H\) is linear, then \(H\) is abelian.
\end{lem}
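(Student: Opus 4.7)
The plan is to reduce to the cited exercise from Isaacs. The key observation, already noted in the introduction, is that by the minimality of $|G|$ the character $\chi$ must be faithful: otherwise $\chi$ would descend to a faithful irreducible character of the same degree on the proper quotient $G/\ker\chi$, contradicting minimality. So we may assume $\ker\chi=1$.

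Next I would invoke the content of Problem 2.8 of \cite{CTFG}, which says that for any character $\chi$ of $G$ and any subgroup $H\le G$, every irreducible constituent of $\chi_H$ is linear if and only if $H'\subseteq \ker(\chi_H)$. The direction we need is the easy one: if $\chi_H=\sum_i n_i\lambda_i$ with each $\lambda_i\in\IRR(H)$ linear, then every $\lambda_i$ is trivial on $H'$, so $\chi(h)=\chi(1)$ for every $h\in H'$, which forces $H'\subseteq\ker\chi$.

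Combining these two steps gives $H'\subseteq \ker\chi=1$, hence $H'=1$ and $H$ is abelian. The argument is essentially a one-line application once the faithfulness of $\chi$ is in hand; there is no real obstacle, which is why the authors classify this as an elementary observation used throughout.
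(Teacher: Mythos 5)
Your proposal is correct and is essentially the paper's own argument: both use that $\chi$ is faithful by minimality and then observe that linearity of the constituents of $\chi_H$ forces $\chi$ to take the value $\chi(1)$ on commutators of $H$, so $H'\subseteq\ker\chi=1$. The only cosmetic difference is that the paper works directly with a single commutator $[x,y]$ rather than phrasing it as $H'\subseteq\ker(\chi_H)$.
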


\begin{proof}
Let \(x,y\in H\). Since every irreducible constituent is linear, we have \(\chi([x,y])=\chi(1)\) and so \([x,y]\in \ker \chi = 1\). Therefore, $H$ is abelian. 
\end{proof}

The following is a consequence of Corollary 11.19 of \cite{CTFG}. 

\begin{lem}\label{lem:divp}
Let $G$ be a group of minimal order with an irreducible character \(\chi\in \IRR(G)\) of degree \(\chi(1)=n\). If \(N\lhd G\) is a proper normal subgroup of $G$, then \((|G:N|,n)\g 1\). 
\end{lem}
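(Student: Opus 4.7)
The plan is to argue by contradiction. Suppose $N \lhd G$ is a proper normal subgroup with $(|G:N|, n) = 1$, and let $\theta \in \IRR(N)$ be an irreducible constituent of the restriction $\chi_N$. The cited Corollary 11.19 of \cite{CTFG} tells us that the integer $\chi(1)/\theta(1)$ divides $|G:N|$. Since $\chi(1)/\theta(1)$ also divides $\chi(1) = n$, and $n$ is coprime to $|G:N|$ by assumption, we conclude $\chi(1)/\theta(1) = 1$, i.e., $\theta(1) = n$.

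But then $N$ is a proper subgroup of $G$ admitting an irreducible character of degree $n$, so $|N| < |G|$ contradicts the hypothesis that $G$ has minimal order with this property. Hence $(|G:N|, n) > 1$, as claimed.

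The only nontrivial ingredient is the divisibility $\chi(1)/\theta(1) \mid |G:N|$, which the authors explicitly flag as coming from Corollary 11.19 of \cite{CTFG}; once it is invoked, the rest is a one-line coprimality argument together with the minimality of $G$. I do not foresee any obstacle beyond quoting the correct reference.
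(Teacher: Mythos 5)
Your argument is correct and is essentially the paper's own proof: both reduce to the fact (from Chapter 11 of \cite{CTFG}) that \(\chi(1)/\theta(1)\) divides \(|G:N|\), deduce under the coprimality assumption that \(\chi_N\) is irreducible of degree \(n\), and then contradict the minimality of \(|G|\). The only cosmetic difference is that the paper quotes the corollary directly in the form ``\(\chi_N\in\IRR(N)\)'' rather than deriving it from the divisibility statement.
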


\begin{proof}
Let \(N\lhd G\) be normal and suppose that \(|G:N|\) and \(\chi(1)=n\) are coprime. Then \(\chi_N\in \IRR(N)\) by Corollary 11.19 of \cite{CTFG} and \(N=G\) by the minimality of \(|G|\). 
\end{proof}

We also recall that if \(n\) is an irreducible character degree of a group $G$, then \(n\) divides \(|G|\) and \(n^2\ls \sum_{\chi\in \IRR(G)} \chi(1)^2 = |G|\).

\section{Proof of Theorem A}

As we stated in the introduction, the smallest groups with an irreducible character of degree \(p=2,3\) are \(\SY_3\) and \(\A_4\), respectively, so Theorem~\hyperref[thmA:theoA]{A} holds in this case. For \(p\g 3\), the smallest simple group with an irreducible character of degree \(p\) is \(\PSL_2(p)\) by the main result of \cite{B58}.  It then remains to prove the nonsimple case of Theorem~\hyperref[thmA:theoA]{A}. 

\begin{thm}
Let $G$ be a group of minimal order with an irreducible character \(\chi\in \IRR(G)\) of prime degree \(\chi(1)=p\g 3\). If \(G\) is nonsimple, then Theorem~\hyperref[thmA:theoA]{A} holds in this case. 
\end{thm}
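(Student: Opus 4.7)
The plan is to apply Clifford's theorem to a minimal normal subgroup $N$ of $G$ and use the minimality of $|G|$ to force a Frobenius structure. First I would observe that every proper normal subgroup $K$ of $G$ is abelian: by Clifford, every irreducible constituent of $\chi_K$ has degree dividing $p$; a constituent of degree $p$ would give $K$ itself an irreducible character of degree $p$, contradicting minimality of $|G|$, so all constituents are linear and Lemma~\ref{lem:faithabe} applies. Picking a minimal normal subgroup $N \lhd G$ (which exists because $G$ is nonsimple), we have $N = (\CY_q)^m$ for some prime $q$ and $m \geq 1$. Applying Clifford to $\chi_N$, one writes $\chi_N = e(\theta_1 + \cdots + \theta_t)$ with $et = p$ and the $\theta_i$ linear $G$-conjugates, giving two cases: \textbf{(A)} $e = p$, $t = 1$, or \textbf{(B)} $e = 1$, $t = p$.

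Case~\textbf{(A)} will be the main obstacle. Here $\theta := \theta_1$ is $G$-invariant and $\ker \theta \lhd G$ lies in $\ker \chi = 1$, so $N \cong \CY_q$ is cyclic and contained in $Z(\chi) = Z(G)$. Moreover $\theta$ does not extend to $G$: otherwise Gallagher's theorem would produce an irreducible character of $G/N$ of degree $p$, contradicting minimality. The orthogonality bound $|G| \geq \chi(1)^2 |Z(\chi)| \geq p^2 q$, combined with the upper bound $|G| \leq p q_0^{m_0}$ from the Frobenius group of case~(b) of Theorem~A (where $q_0^{m_0}$ is the smallest prime power with $p \mid q_0^{m_0} - 1$), yields $p q \leq q_0^{m_0}$. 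Ruling out~(A) requires careful further analysis (combining the structure of $G$ as a central extension of $G/N$ with a Sylow-theoretic argument) to conclude that $G$ would have to be abelian, contradicting $\chi(1) = p$.

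In case~\textbf{(B)}, the stabilizer $T = G_{\theta_1}$ has index $p$, contains $N$, and Clifford gives $\chi = \psi^G$ for a linear $\psi \in \IRR(T \mid \theta_1)$. Pick $\sigma \in G/N$ of order $p$ acting cyclically on the orbit $\{\theta_1, \ldots, \theta_p\}$, and view $\hat N$ as an $\mathbb{F}_q[\sigma]$-module. The $\mathbb{F}_q$-span $V$ of the orbit is $\sigma$-cyclic, and its minimal polynomial divides $\Phi_p(X)$ but is not $X-1$; since $\Phi_p$ factors over $\mathbb{F}_q$ into irreducibles of common degree $d := \operatorname{ord}_p(q)$, we have $\dim_{\mathbb{F}_q} V \geq d$. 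Faithfulness of $\chi$ gives $\bigcap_i \ker \theta_i = 1$, so $V = \hat N$ and $|N| \geq q^d \geq q_0^{m_0}$. The chain
\[
 |G| \;=\; p\,|T| \;\geq\; p\,|N| \;\geq\; p\,q^d \;\geq\; p\,q_0^{m_0} \;\geq\; |G|
\]
forces equality throughout: $T = N$, $q = q_0$, $m = m_0$, and $G = N \rtimes \CY_p$. Finally, since $p \mid q^m - 1$ forces $p \neq q$, we have $\Phi_p(1) = p \neq 0$ in $\mathbb{F}_q$, so $1$ is not a root of the minimal polynomial of $\sigma$ on $\hat N$ and $\sigma$ fixes no nonzero vector there; by duality (as $\gcd(|N|, p) = 1$), $\sigma$ fixes no nontrivial element of $N$, making the action of $\CY_p$ fixed-point-free and $G$ the Frobenius group of the required form.
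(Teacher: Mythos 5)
Your case \textbf{(A)} is a genuine gap, and it is the heart of the matter: you label it ``the main obstacle,'' record the bounds \(|G|\geqslant p^2q\) and \(|G|\leqslant pq_0^{m_0}\), and then defer the actual contradiction to ``careful further analysis'' that is never supplied. Those two bounds alone do not close the case (they only give \(q\ls p\) once combined with \(|G|\leqslant |\PSL_2(p)|\ls p^3\)), and the situation \(\chi_N=p\theta\) with \(N\subset \zn(G)\) cyclic does occur for groups with a faithful character of degree \(p\) (e.g.\ extraspecial \(p\)-groups), so ruling it out for the \emph{minimal-order} group requires a real argument, not a routine one. The paper sidesteps this case entirely by a different choice of subgroup: it first takes a maximal normal \(M\lhd G\), uses \cite{B58} and \(|G|\leqslant|\PSL_2(p)|\) to force \(|G:M|=p\) and \(M\) abelian, shows via Sylow counting that \(|P|=p\) with \((|M|,p)=1\), decomposes \(M=\cn[M](P)\times[M,P]\) by Fitting's theorem, and only then picks a minimal normal subgroup \(N\) \emph{inside} \([M,P]\), which guarantees \(\cn[N](P)=1\) and hence the Frobenius structure directly. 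Choosing an arbitrary minimal normal subgroup, as you do, is what creates the central case you cannot dispose of.

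Two secondary points. First, your case \textbf{(B)} tacitly assumes \(q\neq p\) (otherwise \(X^p-1=(X-1)^p\) over \(\mathbb{F}_q\) and the cyclotomic-factor bound \(\dim V\geqslant d\) breaks down); this is fixable --- if \(q=p\) the cyclic module \(V\) still has dimension at least \(2\), giving \(|G|\geqslant p|N|\geqslant p^3\g|\PSL_2(p)|\) --- but it must be said. (Also, the minimal polynomial of \(\sigma\) on \(V\) divides \(X^p-1\) and is not \(X-1\); it need not divide \(\Phi_p\), though the degree bound survives.) Second, the inequality \(|G|\leqslant pq_0^{m_0}\) is obtained by citing case (b) of Theorem~A, which is what you are proving; you must independently \emph{construct} a Frobenius group \((\CY_{q_0})^{m_0}\rtimes\CY_p\) with an irreducible character of degree \(p\), as the paper does in its final paragraph using an element of order \(p\) in \(\GL_{m_0}(q_0)\) together with Fitting's theorem and the minimality of \(q_0^{m_0}\). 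With case (A) resolved and these repairs made, your Clifford-theoretic route through the character orbit would be a legitimate alternative to the paper's group-theoretic one, but as written the proof is incomplete.
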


\begin{proof}
First, note that \(|G|\leqslant |\PSL_2(p)|=p(p^2-1)/2\ls p^3\) by the minimality of \(|G|\). Let \(M\lhd G\) be maximal normal so that \(G/M\) is simple. By Lemma~\ref{lem:divp}, \(p\) divides \(|G:M|\ls |G|\leqslant |\PSL_2(p)|\) and, by the main result of \cite{B58}, \(\PSL_2(p)\) is the smallest nonabelian simple group of order divisible by a prime \(p\g 3\).  Thus \(G/M\) is abelian simple and so \(|G:M|=p\). Then \(\chi_M\) is a sum of linear characters by Corollary 6.19 of \cite{CTFG} and $M$ is abelian by Lemma~\ref{lem:faithabe}.

If a Sylow \(p\te{-subgroup}\) \(P\in \syl(G)\) is normal in $G$, then \(G=P\) by Lemma~\ref{lem:divp} and \(G\) is a \(p\te{-group}\). Hence, \(|G|\geqslant p^3\g |\PSL_2(p)|\) against the minimality of \(|G|\). We deduce that \(P\) cannot be normal and, by Sylow theory, \(|G:\nr(P)|-1\geqslant p\). Thus \(|P|=p\) and \((|M|,|P|)=1\). Letting \(P\) act on $M$ by conjugation, we can write \(M=\cn[M](P)\times [M,P]\) by Fitting's theorem \cite[Theorem~4.34]{FGT} and since \(G\) is nonabelian, \([M,P]\g 1\) is nontrivial. Let \(N\subset [M,P]\lhd MP=G\) be minimal normal in $G$ so that $N$ is elementary abelian and \(1=\cn[N](P)=\cn[N](x)\) for every \(1\neq x\in P\). Then \(NP\) is a Frobenius group with Frobenius complement \(P\) and, by Theorem 6.34 of \cite{CTFG}, \(NP\) has an irreducible character of degree \(|P|=p\). The minimality of \(|G|\) yields \(G=NP=(\CY_q)^m\rtimes \CY_p\) for some prime \(q\) and an integer \(m\geqslant 1\), as required. 

Now, if \(q^m\) is the least prime-power such that $p$ divides \(q^m-1\), then as \(p\) divides the order of \(\GL_m(q)\) we may choose \(x\in \GL_m(q)\) of order $p$. Let \(P=\Span{x}\) act on a vector space $N$ of dimension $m$ over the field of $q$ elements. Again by Fitting's theorem, \(N=\cn[N](P)\times [N,P]\) and the action of \(P\) on \([N,P]\) is Frobenius. By elementary properties of Frobenius groups, \(|P|=p\) divides \(|[N,P]|-1\) and the minimality of \(q^m\) forces \(\cn[N](P)=1\) and \([N,P]=N\).  It follows that \(G=N\rtimes P\) is a Frobenius group and, as before, $G$ has an irreducible character of degree $p$. This completes the proof. 
\end{proof}

\section{Proof of Theorem B}

Let $G$ be a group of minimal order with an irreducible character of degree \(p^2\). Since there exist \(p\te{-groups}\) of order \(p^5\) with an irreducible character of degree \(p^2\) (see \cite{P5}), we may assume from now on that \(|G|\ls p^5\).

To prove Theorem~\hyperref[thmB:theoB]{B}, we begin by assuming that $G$ is solvable. In fact, it suffices to impose the following weaker condition. 

\begin{thm}\label{thm:solvableB}
Let $G$ be a group of minimal order with an irreducible character \(\chi\in \IRR(G)\) of degree \(\chi(1)=p^2\), where \(p\) is a prime. If \(G''\ls G'\ls G\), then $G$ is solvable and Theorem~\hyperref[thmB:theoB]{B} holds in this case. 
\end{thm}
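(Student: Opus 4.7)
The plan is to restrict $\chi$ to $G'$ and apply Clifford's theorem. Write $\chi_{G'} = e(\theta_1 + \cdots + \theta_t)$ with the $\theta_i \in \IRR(G')$ forming a $G$-orbit and $\theta = \theta_1$, so that $et\theta(1) = p^2$. The case $\theta(1) = p^2$ is excluded at once, since $\chi_{G'}$ would be irreducible and the minimality of $|G|$ would force $G = G'$, contradicting $G' < G$. Thus $\theta(1) \in \{1, p\}$.

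If $\theta(1) = 1$, every constituent of $\chi_{G'}$ is linear, so by Lemma~\ref{lem:faithabe} the subgroup $G'$ is abelian and $G$ is metabelian, in particular solvable. Ito's theorem then gives $p^2 \mid |G:G'|$. Writing $\chi = \psi^G$ via Clifford, where $\psi$ is linear on the inertia group $T$ of a constituent $\lambda$ of $\chi_{G'}$ with $|G:T| = p^2$, the Fitting decomposition $M = \cn[M](P) \times [M, P]$ used in the proof of Theorem~\hyperref[thmA:theoA]{A} (for $P$ a Sylow $p$-subgroup acting on a minimal $p'$-normal subgroup $M$) pins down the structure of $G$. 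When $G/G'$ is cyclic of order $p^2$, one obtains the Frobenius group of case (b); when $G/G' \cong (\CY_p)^2$, the character $\chi$ decomposes as an external tensor product of two degree-$p$ characters and $G$ splits as the direct product of two Frobenius groups of case (c). In each sub-case the minimality of $q^m$ with respect to the required divisibility of $p^2$ or $p$ by $q^m - 1$ is imposed exactly as in the proof of Theorem~\hyperref[thmA:theoA]{A}.

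The case $\theta(1) = p$ is the delicate one, and it is here that the hypothesis $G'' < G'$ plays its role: a perfect $G'$ (such as $G' = \PSL_2(p)$) would satisfy $G'' = G'$, contradicting the hypothesis. In this case $G'$ admits a faithful irreducible of degree $p$, and Clifford splits into two sub-cases. Either $\theta$ is $G$-invariant (so $e = p$ and the projective-representation theory of $G/G'$ forces $p^2 \mid |G:G'|$ and $|G'| \leq p^3$), or $|G:G'| = p$ with $\chi = \theta^G$ (so $|G'| \leq p^4$). Combining $|G'| \geq g(p)$ (from Theorem~\hyperref[thmA:theoA]{A}) with the degree inequality $\chi(1)^2 < |G| \leq p^5$ and with the orders of the constructions in (b) and (c), I would argue that no realization of strictly smaller order than those in (b) or (c) exists, so this case either contradicts minimality or collapses into one already treated. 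Solvability itself is settled by considering the last nontrivial term $L$ of the derived series: if $G$ were nonsolvable, $L$ would be a perfect proper normal subgroup of $G$, and Clifford on $\chi_L$, together with the absence of nontrivial linear characters of $L$, would force $L$ to have an irreducible of degree $p$; Theorem~\hyperref[thmA:theoA]{A} would then place $\PSL_2(p)$ inside $L$, conflicting with the bounds coming from Lemma~\ref{lem:divp} and $|G| \leq p^5$.

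The hardest step will be this solvability argument. Since the exceptional simple case (a) of Theorem~\hyperref[thmA:theoA]{A} does occur (for $p = 19$), the single bound $|G| \leq p^5$ is not enough on its own to rule out an extension of $\PSL_2(p)$ carrying a degree-$p^2$ irreducible; a careful analysis of how $\PSL_2(p)$ (or any other perfect group with a degree-$p$ character) can embed in $G$, and of the extension cohomology controlling the degree-$p^2$ character, appears unavoidable.
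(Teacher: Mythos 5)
Your proposal is not a complete proof: the places you yourself flag with ``I would argue'' or ``appears unavoidable'' are exactly where the real work lies, and the route you choose (Clifford applied to $\chi_{G'}$) makes that work harder than necessary. Concretely: (i) in the case $\theta(1)=1$ you only treat the sub-case $\chi=\psi^G$ with $\psi$ linear and $|G:T|=p^2$; the possibilities $|G:T|=p$ with $\psi(1)=p$, and the fully ramified case $\chi_{G'}=p^2\lambda$ (which is where an extraspecial quotient of order $p^3$ enters) are not addressed, nor is the a priori possibility $|G:G'|\in\{p^3,p^4\}$ ruled out. (ii) The assertion that $G/G'\cong(\CY_p)^2$ forces $G$ to split as a direct product of two Frobenius groups is precisely the content of the paper's Lemma~\ref{lem:Bc}, whose proof takes three paragraphs and uses the Hall--Higman lemma, Ito's theorem, the Frattini argument and Isaacs' Theorem 12.11; it cannot be waved through as ``the character decomposes as an external tensor product.'' (iii) The case $\theta(1)=p$ and the solvability claim are simply left open.

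The paper avoids all three difficulties by working with a quotient rather than with $G'$: it takes $K\lhd G$ containing $G''$ maximal such that $G/K$ is nonabelian and applies Lemma~\ref{lem:12.3}, so $G/K$ is either extraspecial or a Frobenius group with abelian complement. The extraspecial case is killed by a counting argument: one shows $\chi_K=p\sum_{i=1}^{p}\lambda_i$, deduces that $\chi$ vanishes on $G\sm K$, and concludes that $|P|=p^3$ divides $\chi(1)=p^2$, a contradiction. In the Frobenius case with $K\g 1$, the squeeze $|H|^2\geqslant|G|=|G:K|\,|K|\geqslant|H|^2$ (where $H$ is of minimal order with a degree-$p$ character) forces $K$ to be a minimal-order group with a degree-$p$ character whose order has exactly two prime divisors, so Burnside's $p^aq^b$-theorem makes $K$ solvable and Theorem A identifies it as $(\CY_q)^m\rtimes\CY_p$; this is how the paper disposes of your worry about $\PSL_2(p)$ hiding inside $G$, and it is exactly the step your outline lacks. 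To salvage your approach you would need, at a minimum, a proof of the direct-product splitting (Lemma~\ref{lem:Bc}) and a substitute for this squeeze-plus-Burnside argument in the $\theta(1)=p$ and nonabelian-$G'$ situations.
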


Our proof Theorem~\ref{thm:solvableB} relies on Lemma 12.3 of \cite{CTFG}, part of which we state here for the reader's convenience.

\begin{lem}\label{lem:12.3}
Let $G$ be solvable and assume that \(G'\) is the unique minimal normal subgroup of $G$. Then exactly one of the following occurs.
\begin{enumerate}[(a)]
\item $G$ is an extraspecial \(p\te{-group}\).  
\item \(G\) is a Frobenius group with elementary abelian Frobenius kernel \(G'\) and abelian Frobenius complement. 
\end{enumerate}
\end{lem}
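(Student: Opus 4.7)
The plan is to split into two cases according to whether $G' \subseteq \zn(G)$ or $G' \cap \zn(G) = 1$. Since $G$ is solvable and $G'$ is a minimal normal subgroup, $G'$ is elementary abelian of exponent some prime $p$. The subgroup $G' \cap \zn(G)$ is normal in $G$ and contained in $G'$, so by minimality it equals either $1$ or $G'$, leading to the two cases of the conclusion.

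For Case (a), when $G' \subseteq \zn(G)$: then $[G,G] \subseteq \zn(G)$, so $G$ is nilpotent of class at most $2$, hence the direct product of its Sylow subgroups. Each nontrivial Sylow $q$-subgroup is characteristic in $G$, and its centre contains a minimal normal subgroup of $G$; by uniqueness this must be $G'$, forcing $q = p$. So $G$ is a $p$-group. Next, every subgroup of $\zn(G)$ is normal in $G$, so uniqueness of $G'$ forces $\zn(G)$ to possess a unique subgroup of order $p$ and therefore to be cyclic; since $G'$ is elementary abelian sitting inside a cyclic $p$-group, $|G'|=p$. The remaining step --- showing $|\zn(G)|=p$, so that $\zn(G) = G' = \Phi(G)$ has order $p$ and $G$ is extraspecial --- is the technical heart of this case.

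For Case (b), when $G' \cap \zn(G) = 1$: any nontrivial normal subgroup of $G$ contains the unique minimal normal $G'$, so the assumption $G' \cap \zn(G) = 1$ forces $\zn(G) = 1$. The next goal is $\F(G) = G'$. The Fitting subgroup $\F(G)$ is nontrivial by solvability; moreover $\op_q(G) = 1$ for $q \neq p$ (else it would contain a minimal normal $q$-group, impossible), so $\F(G) = \op_p(G)$ is a $p$-group containing $G'$. A short nilpotence/commutator argument together with the Fitting-centraliser property $\cn[G](\F(G)) \subseteq \F(G)$ shows $G' \subseteq \zn(\F(G))$, and a socle analysis of the elementary abelian $G$-module $\Omega_1(\zn(\F(G)))$, combined with uniqueness of $G'$ as minimal normal, forces $\F(G) = G'$. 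Hence $(|G'|, |G/G'|) = 1$, so Schur--Zassenhaus supplies a complement $H \cong G/G'$, which is abelian. Finally, the action of $H$ on $G'$ must be fixed-point-free: a nontrivial fixed point would give a proper nontrivial $H$-invariant (and therefore $G$-invariant) subgroup of $G'$, contradicting minimality. So $G = G' \rtimes H$ is a Frobenius group with elementary abelian kernel $G'$ and abelian complement $H$.

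The main obstacle will be Case (a): extracting $|\zn(G)| = p$ from uniqueness of $G'$ as minimal normal is the subtle and crucial step, while the Frobenius half in Case (b) is more routine once $\F(G) = G'$ is secured via the Fitting-centraliser machinery.
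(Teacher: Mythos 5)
First, a point of reference: the paper does not prove this lemma at all --- it is quoted, explicitly as ``part of'' Lemma 12.3 of Isaacs's book, and used as a black box --- so there is no in-paper argument to measure you against. Your dichotomy $G'\subset \zn(G)$ versus $G'\cap \zn(G)=1$ is the right way to separate the two conclusions, and the skeleton of your Case (b) (reduce to $\F(G)=G'$, apply Schur--Zassenhaus, then deduce fixed-point-freeness from the fact that $\cn[G'](h)$ is normal in $G$ for $h$ in the abelian complement) is essentially the standard proof. The serious problem is in Case (a): the step you defer as ``the technical heart'', namely $|\zn(G)|=p$, is not merely subtle --- it is false. Let $E$ be extraspecial of order $p^3$ and let $G=E\ast \CY_{p^2}$ be the central product identifying $\zn(E)$ with the subgroup of order $p$ of $\CY_{p^2}$. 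Then $|G|=p^4$, $G'=\zn(E)$ has order $p$, and since $\zn(G)\cong \CY_{p^2}$ is cyclic and every minimal normal subgroup of a $p$-group lies in its centre, $G'$ is the unique minimal normal subgroup; yet $G$ is not extraspecial. Isaacs's actual Lemma 12.3(a) concludes only that $G$ is a $p$-group with cyclic centre and $G/\zn(G)$ elementary abelian --- which is precisely what your argument correctly establishes --- and the restatement in the present paper overshoots. (The overstatement is harmless for the paper's application, which only uses that $G/K$ is a nonabelian $p$-group, hence of order at least $p^3$.) So no completion of your Case (a) is possible as the statement stands; you should record the weaker, correct conclusion.

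Second, in Case (b) the one substantive claim, $\F(G)=G'$, is only gestured at, and the gesture does not close as written. Uniqueness of $G'$ tells you that the socle of the $G$-module $V=\Omega_1(\zn(\F(G)))$ is $G'$, but a module with simple socle need not be simple, and even $V=G'$ would not force $\F(G)=G'$ (consider $\F(G)$ cyclic of order $p^2$). The cleaner route is to show $\cn(G')=G'$: one checks $G'\subset \zn(\cn(G'))$ and $\cn(G')'\subset G'$, so $\cn(G')$ is nilpotent and hence $\cn(G')=\F(G)$; ruling out $\F(G)\g G'$ then still requires a genuine argument (for instance, $[V,G]=G'$ together with $\cn[V](G)=1$ and a Maschke/coprimality step, or the character-theoretic argument Isaacs actually uses). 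Unlike the Case (a) issue, this gap is fillable, but as it stands it is a gap.
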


The way  Lemma~\ref{lem:12.3} is applied to the proof of Theorem~\ref{thm:solvableB} is the following. Let \(G''\subset K\lhd G\) be maximal such that \(G/K\) is nonabelian. Then  \(G/K\) is solvable and \((G/K)'\) is the unique minimal normal subgroup of \(G/K\). We may thus apply Lemma~\ref{lem:12.3} to the group \(G/K\). 

\medskip

In order to handle case (c) of Theorem~\hyperref[thmB:theoB]{B}, we use the following lemma, whose proof we present after proving Theorem~\ref{thm:solvableB}.

\begin{lem}\label{lem:Bc}
Let \(G\) be a group of minimal order with an irreducible character \(\chi\in \IRR(G)\) of degree \(\chi(1)=p^2\). If there exists \(K\lhd G\) such that \(G/K\) and \(K\) are Frobenius groups of the form \((\CY_q)^m\rtimes \CY_p\), then \(G\cong K\times (G/K)\). 
\end{lem}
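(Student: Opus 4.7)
The goal is to realize the extension $1 \to K \to G \to G/K \to 1$ as an internal direct product. Since $K$ is a Frobenius group with cyclic complement of prime order, $\zn(K) = 1$, and because $K \lhd G$, also $\cn(K) \lhd G$ with $\cn(K) \cap K = 1$. Thus $|\cn(K)|$ divides $|G/K| = q^m p$, with equality if and only if $G = K \cdot \cn(K) = K \times \cn(K)$ and $\cn(K) \cong G/K$. Equivalently, it suffices to show that the induced map $\bar\phi \colon G/K \to \AU(K)/\textnormal{Inn}(K)$ is trivial. Note also that minimality of $|G|$ forces $m$ to be the order of $q$ modulo $p$: otherwise a direct product of two smaller Frobenius groups $(\CY_q)^{m'} \rtimes \CY_p$ with $m' < m$ (and $p \mid q^{m'} - 1$) would have order $q^{2m'} p^2 < |G|$ and an irreducible character of degree $p^2$.

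My first observation is that every element of $G$ acts trivially on $K/Q \cong \CY_p$: the conjugation map $G/K \to \AU(K/Q) \cong \CY_{p-1}$ factors through the abelianization of $G/K \cong (\CY_q)^m \rtimes \CY_p$, which is $\CY_p$ (its commutator is the Frobenius kernel, as $\sigma - 1$ is invertible on $Q$), and $\CY_p$ does not embed in $\CY_{p-1}$. Writing $\sigma \in \GL_m(q)$ for the matrix encoding the action of $P$ on $Q$ and parameterizing $\AU(K)$ by triples $(A, a, z)$ with $A \in \nr[\GL_m(q)](\Span{\sigma})$, $A\sigma A^{-1} = \sigma^a$, and $z \in Q$, this means the image of $G \to \AU(K)$ lies in the subgroup where $a = 1$, namely $\cn[\GL_m(q)](\sigma) \times Q$.

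Inside this subgroup, $\textnormal{Inn}(K)$ corresponds to $\Span{\sigma} \times Q$ (again using invertibility of $\sigma - 1$ on $Q$). So the image of $\bar\phi$ lies in $\cn[\GL_m(q)](\sigma)/\Span{\sigma}$; by the minimality of $m$, $Q$ is an irreducible $\mathbb{F}_q[\Span{\sigma}]$-module, so $\cn[\GL_m(q)](\sigma) \cong \mathbb{F}_{q^m}^{\times}$ and the quotient is cyclic of order $(q^m - 1)/p$. Since the target is cyclic, $\bar\phi$ factors through the abelianization $\CY_p$ of $G/K$, so its image is trivial or cyclic of order $p$.

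If the image had order $p$, then $p \mid (q^m - 1)/p$, i.e.\ $p^2 \mid q^m - 1$. Then $\mathbb{F}_{q^m}^{\times}$ contains an element $\zeta$ of order $p^2$, and multiplication by $\zeta$ on $\mathbb{F}_{q^m} \cong (\CY_q)^m$ yields a Frobenius group $F = (\CY_q)^m \rtimes \CY_{p^2}$ of order $q^m p^2 < q^{2m} p^2 = |G|$; by Theorem 6.34 of \cite{CTFG}, $F$ admits an irreducible character of degree $p^2$, contradicting the minimality of $|G|$. Hence $\bar\phi$ is trivial and $G \cong K \times (G/K)$. The main obstacle is computing $\AU(K)$ precisely enough to identify $\cn[\GL_m(q)](\sigma)/\Span{\sigma}$ as the relevant piece of $\AU(K)/\textnormal{Inn}(K)$ containing the image of $\bar\phi$; once this is done, the minimality of $|G|$ does the rest.
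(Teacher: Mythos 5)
Your proof is correct, but it takes a genuinely different route from the paper's. The paper first shows that \(\op_{p'}(G)\) is an abelian normal \(p\)-complement (via the Hall--Higman Lemma 1.2.3, since a Sylow \(p\)-subgroup is abelian of order \(p^2\)), deduces \(\CD(G)\subset\{1,p,p^2\}\) from It\^o's theorem, and then combines the Frattini argument with Theorem 12.11 of \cite{CTFG} (applied to \(\nr(Q)\), whose degree set is \(\{1,p\}\)) to produce, for each order-\(p\) complement, an abelian complement of the corresponding kernel inside \(\op_{p'}(G)\) that it centralizes; assembling the two pieces gives the internal direct product. You instead reduce the statement to the vanishing of the outer action \(G/K\to\AU(K)/\textnormal{Inn}(K)\), and compute that the relevant piece of this outer automorphism group is \(\cn[\GL_m(q)](\sigma)/\Span{\sigma}\), cyclic of order \((q^m-1)/p\) by Schur's lemma once minimality of \(|G|\) forces the Frobenius kernel to be an irreducible \(\Span{\sigma}\)-module; since the image is a quotient of the abelianization \(\CY_p\) of \(G/K\), it is trivial unless \(p^2\mid q^m-1\), which you rule out by exhibiting the smaller Frobenius group \((\CY_q)^m\rtimes\CY_{p^2}\) with a degree-\(p^2\) character. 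Each step checks out; the only cosmetic slip is that the subgroup of \(\AU(K)\) acting trivially on \(K/Q\) is the semidirect product \(Q\rtimes\cn[\GL_m(q)](\sigma)\) rather than a direct product, but its quotient by \(\textnormal{Inn}(K)=Q\rtimes\Span{\sigma}\) is \(\cn[\GL_m(q)](\sigma)/\Span{\sigma}\) exactly as you use it. Your approach trades the paper's character-theoretic machinery (Hall--Higman, It\^o, Theorem 12.11) for an explicit description of \(\AU(K)\): it localizes the use of minimality to two clean numerical comparisons and makes visible the precise arithmetic obstruction (\(p^2\nmid q^m-1\)) to a nontrivial outer action, whereas the paper's argument is shorter once the standard results are quoted and never needs to compute an automorphism group.
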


We will also use the fact that a Singer subgroup of \(\GL_m(q)\) is cyclic of order \(q^m-1\) (see \cite[Satz~II.7.3]{H67}) and, in particular, \(\GL_m(q)\) contains an element of order $d$ for every divisor $d$ of \(q^m-1\).

\begin{proof}[Proof of Theorem~\ref{thm:solvableB}]
We assume that \(|G|\ls p^5\) and we let \(G''\subset K\lhd G\) be maximal with the property that \(G/K\) is nonabelian. Then we are on the hypotheses of Lemma~\ref{lem:12.3} and we suppose first that case (b) holds so that \(G/K\) is a Frobenius group with abelian Frobenius complement and elementary abelian Frobenius kernel \(N/K=(G/K)'\). Thus \(G/N\) is abelian and it must be a \(p\te{-group}\) by Lemma~\ref{lem:divp}. 

If \(K=1\), then \(G'' = 1\) and \((|N|,|G:N|)=1\), so a Sylow \(p\te{-subgroup}\) \(P\in \syl(G)\) is a Frobenius complement of $G$. Since abelian Frobenius complements are cyclic, we can argue as in the proof of Theorem~\hyperref[thmA:theoA]{A} to deduce that $G$ is of the form \((\CY_q)^m \rtimes \CY_{p^2}\). Also, if \(q^m\) is the least prime-power such that \(p^2\) divides \(q^m-1\), then \(\GL_m(q)\) contains an element of order \(p^2\). Arguing again as in the proof of Theorem~\hyperref[thmA:theoA]{A}, we can construct a Frobenius group of the form \((\CY_q)^m\rtimes \CY_{p^2}\). Thus Theorem~\hyperref[thmB:theoB]{B} holds in this case. 

If \(K\g 1\), then since \(G/K\) has an irreducible character of degree \(|G:N|\) by Theorem 6.34 of \cite{CTFG}, we have \(|G:N|=p\) by the minimality of \(|G|\). Thus \(|G:K|\geqslant |H|\) where $H$ is of minimal order having an irreducible character of degree \(p\). Let \(\theta\in \IRR(N)\) lie under \(\chi\) so that \(\theta(1)=p\) by \cite[Corollary~6.19]{CTFG}. Since \((|N:K|,p)=(|N:K|,|G:N|)=1\), we have \(\theta_K\in \IRR(K)\) and therefore \(|K|\geqslant |H|\). But \(H\times H\) has an irreducible character of degree \(p^2\), so \(|H|^2\geqslant |G|=|G:K| |K|\geqslant |H|^2\) and equality holds throughout. In particular, \(K\) and \(G/K\) are minimal-order groups with an irreducible character of degree \(p\). Since \(|K|=|H|=|G:K|\) and \(|G:K|\) is divisible by exactly two primes by Theorem~\hyperref[thmA:theoA]{A}, then \(K\) is nonsimple by Burnside's \(p^aq^b\te{-theorem}\). Again by Theorem~\hyperref[thmA:theoA]{A}, \(K\) is a Frobenius group of the form \((\CY_q)^m\rtimes \CY_p\)   . It readily follows by Lemma~\ref{lem:Bc} that Theorem~\hyperref[thmB:theoB]{B} holds in this case. 
												
Otherwise, \(G/K\) is an extraspecial \(q\te{-group}\) for a prime \(q\) and so \(|G:K|\geqslant q^3\). By Lemma~\ref{lem:divp}, \(q=p\).  Since \(p^4=(p^2)^2\ls |G|\) and we are assuming that \(|G|\ls p^5\), then $G$ is not a \(p\te{-group}\) and a Sylow \(p\te{-subgroup}\)  \(P\in \syl(G)\) cannot be normal by Lemma~\ref{lem:divp}. Therefore, \(|G:\nr(P)|-1\geqslant p\) by Sylow theory and hence \(|P|=|G:K|=p^3\) and \(|K|\ls p^2\).  

Now, if \(\lambda\in \IRR(K)\) lies under \(\chi\) then \(\lambda(1)\) divides both \(|K|\) and \(\chi(1)=p^2\), so \(\lambda(1)=1\) and \(K\) is abelian by Lemma~\ref{lem:faithabe}. Since \(K\not\subset \zn(G)\) because \(P\) is not normal and \(\zn(G)=\zn(\chi)\) by Lemma 2.27 of \cite{CTFG}, then \(\chi_K\) has at least \(p\) irreducible constituents. But \(|\IRR(K)|=|K|\ls p^2\), so Clifford's theorem yields \(\chi_K = p\sum_{i=1}^p \lambda_i\) for distinct \(\lambda_i\in \IRR(K)\) and \(i=1,\ldots,p\). Thus
\[ [\chi_K,\chi_K] =p^2\sum_{i,j=1}^p [\lambda_i,\lambda_j] =p^3 =  |G:K|[\chi,\chi] \]
and, by Lemma 2.29 of \cite{CTFG}, \(\chi\) vanishes on \(G\sm K\). In particular, \(\chi_P\) vanishes on \(P\sm \{1\}\)  and, applying Problem 2.16 of \cite{CTFG}, \(|P|=p^3\) divides \(\chi(1)=p^2\). This contradiction completes the proof of Theorem~\ref{thm:solvableB}. 
\end{proof}

We now prove Lemma~\ref{lem:Bc}. 

\begin{proof}[Proof of Lemma~\ref{lem:Bc}]
We first check that \(G\) has an abelian normal \(p\te{-complement}\). Using the bar convention, write \(\bar{G}=G/\op_{p'}(G)\) and note that \(\op_{p'}(\bar{G})=1\). Since \(P\in \syl(G)\) is abelian as a \(p\te{-group}\) of order \(p^2\), we have \[\op_p(\bar{G})\subset \bar{P}\subset \cn[\bar{G}](\op_p(\bar{G}))\subset \op_p(\bar{G}),\]
where the last containment follows by the Hall-Higman Lemma 1.2.3. \cite[Theorem~3.21]{FGT}. Thus \(\bar{P}=\op_p(\bar{G})\lhd \bar{G}\) and \(\bar{P}=\bar{G}\) by Lemma~\ref{lem:divp}. That is, \(\op_{p'}(G)\) is a normal \(p\te{-complement}\). 

To prove that \(\op_{p'}(G)\) is abelian, let \(\theta\in \IRR(\op_{p'}(G))\) lie under \(\chi\). The number \(\theta(1)\) divides both \(|\op_{p'}(G)|\) and \(\chi(1)=p^2\), so \(\theta(1)=1\) and \(\op_{p'}(G)\) is abelian by Lemma~\ref{lem:faithabe}. Applying Ito's theorem \cite[Theorem~6.15]{CTFG}, we deduce that \(\CD(G)\subset \{1,p,p^2\}\) where \(\CD(G)\) denotes the set of irreducible character degrees \(\{\psi(1)\,|\, \psi\in \IRR(G)\}\).

Write \(K=NQ\) where \(N=\op_{p'}(K)\lhd G\) is the Frobenius kernel of $K$ and \(Q\subset     K\) is the Frobenius complement of order $p$. We claim that \(Q\) centralizes some complement \(M\subset\op_{p'}(G)\) for \(N\) in \(\op_{p'}(G)\). By elementary properties of Frobenius groups, \(1=\nr[N](Q)=N\cap \nr(Q)\) and, writing \(H=\nr(Q)\), we have \(G=KH=(NQ)H=NH\) by the Frattini argument. Then \(\CD(H)=\CD(G/N)\subset \CD(G)\subset \{1,p,p^2\}\) and, since \(G/N\) is nonabelian, the minimality of \(|G|\) yields \(\CD(H)=\{1,p\}\). Theorem 12.11 of \cite{CTFG} applies and we may take \(A\lhd H\) abelian with \(|H:A|=p\). Let \(M=\op_{p'}(A)=\op_{p'}(H)\) so that it is a complement for \(N\) in \(\op_{p'}(G)\).  Both \(M\) and \(Q\) are normal in \(H=\nr(Q)\), so \([M,Q]\subset M\cap Q=1\) as claimed.

Since \(G=NH\) and \([N,M]\subset [\op_{p'}(G),\op_{p'}(G)] = 1\), then \(M\) is normal in $G$.  
Writing \(\bar{G}=G/M\), then \(K\cong \bar{K}\subset \bar{G}\), so \(\bar{G}\) is nonabelian and we may take \(M\subset L\lhd G\) maximal such that \(G/L\) is nonabelian. Arguing as in the proof of Theorem~\ref{thm:solvableB}, we deduce that \(L\) and \(G/L\) are Frobenius groups of the form \((\CY_q)^m\rtimes \CY_p\). Thus \(L=MS\) for some \(S\subset G\) of order $p$ and, since \(A=MQ\) is abelian, \(L\cap Q=1\) and  \(QS\in \syl(G)\). 

As before, we can argue that \(S\) centralizes some complement \(N_0\subset \op_{p'}(G)\) for \(M\) in \(\op_{p'}(G)\). But writing again \(\bar{G}=G/M\), we then have \(\bar{N_0} =\op_{p'}(\bar{G})=\bar{N}\), so \(1=[\bar{S},\bar{N_0}] = [\bar{S},\bar{N}]=\bar{[S,N]}\) and hence \([S,N]\subset M\). Since we also have \([S,N]\subset N\lhd G\), then \([S,N]\subset N\cap M=1\) and it follows at once that \(G=\op_{p'}(G)(QS)=(NQ)\times (MS)\cong K\times (G/K)\). 
\end{proof}

We complete the proof of Theorem~\hyperref[thmB:theoB]{B} by showing that no nonsolvable group can be a group of minimal order with an irreducible character of degree \(p^2\). As we checked in the introduction, the smallest groups with an irreducible character of degree \(p^2 = 4,9\) are \(\CY_5\rtimes \CY_4\) and \(\A_4\times \A_4\), respectively. Thus Theorem~\hyperref[thmB:theoB]{B} holds for \(p=2\) and \(p=3\).

For primes \(p\g 3\), we use the classification of finite simple groups. We shall also use the fact that a nonabelian simple group $S$ has an irreducible character \(\chi\in\IRR(S)\) of \(p\te{-defect}\) zero (that is, $p$ does not divide \(|S|/\chi(1)\)). This was stablished by G. Michler for finite simple groups of Lie type  \cite{M86} and by  Granville-Ono for alternating groups \cite{G96}. For  sporadic groups, the assertion follows from direct inspection of the ATLAS \cite{C85}.

\medskip

We begin with the following lemma.

\begin{lem}
Let $G$ be a group of minimal order with an irreducible character \(\chi\in \IRR(G)\) of degree \(\chi(1)=p^2\g 9\). If $G$ is nonsolvable, then \(G\) is semisimple. 
\end{lem}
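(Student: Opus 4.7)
The plan is to argue by contradiction: I assume the solvable radical $R := \op_\infty(G)$ is nontrivial, and derive a contradiction from the bound $|G|\ls p^5$ (recorded just after Theorem~\hyperref[thmB:theoB]{B}), Lemma~\ref{lem:faithabe}, Lemma~\ref{lem:divp}, and Clifford theory.

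First I would record that $\op_{p'}(G)$ is abelian: any irreducible constituent of $\chi$ restricted to it has degree dividing $\gcd(p^2,|\op_{p'}(G)|)=1$, so linearity and Lemma~\ref{lem:faithabe} apply, and Ito's theorem then gives $p^2\mid[G:\op_{p'}(G)]$. Next I would pick a minimal normal subgroup $A\lhd G$ with $A\subset R$. Then $A$ is elementary abelian of prime-power order $q^m$, and an irreducible constituent $\lambda\in\IRR(A\mid\chi)$ must be nontrivial, since otherwise $A\subset\ker\chi=1$.

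Set $T:=I_G(\lambda)$ and let $\eta\in\IRR(T\mid\lambda)$ be the Clifford correspondent, so $\chi=\eta^G$ and $[G:T]\,\eta(1)=p^2$, whence $[G:T]\in\{1,p,p^2\}$. I would treat the three cases as follows. If $T=G$, then $G$-invariance of $\lambda$ together with minimality of $A$ forces $\ker\lambda=1$, so $A$ is cyclic of prime order $q$ and $A\subset\zn(G)$. If $[G:T]=p^2$, then $\eta$ is linear on $T$, so $T'\subset\ker\eta$; consequently $(\core(T))'\subset\bigcap_{g\in G}(\ker\eta)^g=\ker\chi=1$, so $\core(T)$ is abelian, and the coset action embeds $G/\core(T)$ into $\Sym_{p^2}$. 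If $[G:T]=p$, then $\eta(1)=p$, so $T$ is a proper subgroup of $G$ of index $p$ bearing an irreducible character of degree $p$, which invites the use of Theorem~\hyperref[thmA:theoA]{A} to bound $|T|$ below.

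In each subcase I would combine the resulting structural constraints with $|G|\ls p^5$, Lemma~\ref{lem:divp} (which forces $p\mid[G:R]$), and the existence of a nonabelian composition factor of $G$ of order at least $60$ dividing $[G:R]$ to reach a contradiction. I expect the hardest subcase to be $T=G$, where $A\subset\zn(G)$ is cyclic of prime order and the Clifford decomposition degenerates. There I would pass to $\bar G:=G/A$, whose solvable radical is $R/A$ and which is still nonsolvable, and iterate the argument; the cyclicity of $\zn(G)$ (a consequence of the faithfulness of $\chi$) forces the central portion of $R$ to be very small, while the remaining noncentral structure is ruled out by the other two subcases, yielding $R=1$ as required.
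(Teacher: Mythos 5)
Your framework has a genuine gap at the level of what is being proved, in both directions. First, even a complete proof that the solvable radical $R$ is trivial would not establish the lemma: ``semisimple'' here means quasisimple ($G=G'$ with $G/\zn(G)$ nonabelian simple), and a group with $R=1$ can still be, say, a direct product of two simple groups each carrying a degree-$p$ character, a wreath-type product, or an almost simple group $S.p$ with $p$ dividing $|\te{Out}(S)|$. Ruling these out is where essentially all the work of the paper's proof lives: it takes $N\lhd G$ maximal inside $G'$, shows the number $r$ of simple factors of $G'/N$ is a power of $p$, so $r\g 1$ forces $|S|\leqslant |G|^{1/p}\ls p^{5/p}\ls 7$ (absurd), and then eliminates the configuration $G''=G'\ls G$ using the bound $|\te{Out}(S)|\leqslant \log_2|S|$ from \cite{K03}, the existence of $p$-defect-zero characters of simple groups, and the squeeze $|H|^2\geqslant |G|\geqslant |\PSL_2(p)||H|\geqslant |H|^2$. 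None of this is reachable from your three Clifford subcases, and ``a nonabelian composition factor of order at least $60$'' is far too weak against $|G|\ls p^5$. Second, and conversely, the conclusion of the lemma is compatible with $R\neq 1$: the paper's own proof only shows $|\zn(G)|\ls p$ and explicitly carries a possibly nontrivial center into the next theorem. So your ``hardest subcase'' $T=G$ with $A\subseteq\zn(G)$ is not an obstruction to be contradicted --- it is the expected outcome --- and your proposed iteration on $\bar G=G/A$ is illegitimate: since $\lambda\neq 1_A$ and $\chi$ is faithful, $A\not\subseteq\ker\chi$, so $\chi$ does not factor through $G/A$ and neither the degree-$p^2$ character nor the minimality hypothesis is inherited.

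The subcase analysis also stalls locally. In the case $|G:T|=p^2$, the embedding of $G/\core(T)$ into $\Sym_{p^2}$ gives no usable information, as $(p^2)!$ dwarfs $p^5$; knowing $\core(T)$ is abelian does not by itself interact with nonsolvability. In the case $|G:T|=p$, Theorem~\hyperref[thmA:theoA]{A} bounds $|T|$ below only by $\min\{|\PSL_2(p)|,\,q^mp\}$, which can be close to $p^2$, so $|G|\geqslant p|T|$ lands comfortably under $p^5$ and yields no contradiction. To salvage the argument you would need to replace the ``$R\neq 1$'' contradiction with a direct structural analysis of the nonsolvable layer --- in effect reconstructing the paper's use of the generalized Fitting structure above $N$, the outer automorphism bound, and defect-zero characters --- at which point the Clifford triage on a minimal normal subgroup of $R$ becomes superfluous.
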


\begin{proof}
We first note that either \(G''=G'\ls G\) or \(G'=G\) by Theorem~\ref{thm:solvableB}. Let \(N\lhd G\) be maximal normal such that \(N\subset G'\). Then \(G'/N\) is a direct product of \(r\geqslant 1\) groups isomorphic to a nonabelian simple group $S$ and \(G/N\) acts transitively on these $r$ factors. Since \(G/G'\) is abelian, it must be a \(p\te{-group}\) by Lemma~\ref{lem:divp} 
and, if \(r\g 1\), then \(r\geqslant p\). In this case,  \(|S|\leqslant |G'/N|^{1/p}\). But we are assuming that \(|G|\ls p^5\), so \(|S|\ls p^{5/p}\ls 7\) which is, of course, not true. Thus \(r=1\) and \(G'/N=S\).  

Assume that \(G''=G'\ls G\) and let \(G/N\) act by conjugation on  the nonabelian simple group \(S=G'/N\) to obtain a homomorphism \(G/N\rightarrow \te{Aut}(S)\). Let \(K/N\) be the kernel of the homomorphism and note that \(G'\cap K=N\) and \(|G:G'K|\) divides \(|\te{Out}(S)|\). If \((|S|,p)=1\) and \(\theta\in \IRR(G')\) lies under \(\chi\), then \(\theta_N\in \IRR(N)\) and, as \(G'\) is nonabelian, \(\theta(1)=p\). Hence, \(|N|\g p^2\) and as \(|G|\ls p^5\) and \(|G:G'|\geqslant p\), we have \(|S|\ls p^2\). But a Sylow \(p\te{-subgroup}\) of \(G/N\) cannot be normal by Lemma~\ref{lem:divp}, so \(p\) divides \(|G:G'K|\) and \(p\leqslant |\te{Out}(S)|\). Applying the main result of \cite{K03}, we obtain that \(p\leqslant |\te{Out}(S)|\leqslant \log_2 |S|\ls 2\log_2  p\) so that   \(p\leqslant 3\) against our assumption.

If, otherwise, \(p\) divides \(|S|\), suppose first that $p$ divides \(|G:G'K|\leqslant |\te{Out}(S)|\). Then \(|S|\ls p^4\) and applying again \cite{K03} we have \(p\leqslant |\te{Out}(S)|\leqslant \log_2 |S| = 4\log_2 p\), so \(p\in \{5,7,11,13\}\). Using \(\mathsf{GAP}\), we obtain that either \(S\in \{\A_5,\A_6\}\), \(S=M_{11}\), \(S=\PSL_3(3)\) or \(S=\PSL_2(q)\) for some \(q\in \{7,8,11,13,23,25,27\}\). But then it is known that \(|\te{Out}(S)|\in \{1,2,4\}\), contradicting that \(3\ls p\leqslant |\te{Out}(S)|\). 

We deduce that \(G=G'K\) and, since \(S\) has a character of \(p\te{-defect}\) zero and \(|S|\ls p^4\), we have \(|S|_p= p\) (where \(n_p\) denotes largest \(p\te{-power}\) dividing a positive integer $n$).  In particular, \(\chi(1)=p^2\) does not divide \(|S|=|G:K|\) and $K$ must be nonabelian by Ito's theorem \cite[Theorem~6.15]{CTFG}. Then, by Lemma~\ref{lem:faithabe} and the minimality of \(|G|\), an irreducible constituent of \(\chi_K\) must be of degree \(p\). If $H$ is of minimal order with an irreducible character of degree \(p\), we then have
\[|H|^2 \geqslant |G| =|S||K|\geqslant |\PSL_2(p)| |H|\geqslant |H|^2\]
where the second inequality holds because \(\PSL_2(p)\) is the smallest nonabelian simple group of order divisible by \(p\). Equality thus holds throughout and \(|H|=|\PSL_2(p)|\). But then \(|G|=|\PSL_2(p)|^2 \g p^5\) for \(p\g 3\), and this is a contradiction. 

It follows that \(G'=G\) and \(S=G/N\) is a nonabelian simple group. Since \(p\) divides \(|S|\) by Lemma~\ref{lem:divp}, if $N$ is nonabelian we deduce as in the previous paragraph that \(|G|=|\PSL_2(p)|^2\g p^5\), which is false. Thus \(N\) is abelian and \(\chi(1)=p^2\) divides \(|S|\) by Ito's theorem. But \(S\) has an irreducible character of \(p\te{-defect}\) zero, so \(|S|\g p^4\) and \(|N|\ls p\). Since then \(|\IRR(N)|=|N|\ls p\), we can write  \(\chi_N = p^2\lambda\) for some \(\lambda\in \IRR(N)\) and, applying Lemma 2.27 of \cite{CTFG}, \(N\subset \zn(\chi)=\zn(G)\). Thus \(N=\zn(G)\) and \(G/\zn(G)=S\) is nonabelian simple, as required. 
\end{proof}

It now suffices to prove that for a quasisimple group $G$ with an irreducible character of degree \(p^2\), the inequality \(|G|\geqslant p^5\) holds. To this end, we apply the main result of \cite{M01}, which classifies the prime-power degree irreducible characters of quasisimple groups, excluding those with \(G/\zn(G)= \A_n\) for \(n\g 18\).

\begin{thm}
Let $G$ be a group of minimal order with an irreducible character of degree \(p^2\). Then $G$ is solvable.    
\end{thm}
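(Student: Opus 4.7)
We argue by contradiction. Fix a prime $p\geqslant 3$ (the case $p=2$ is settled in the introduction) and suppose $G$ is nonsolvable. The preceding lemma forces $G$ to be quasisimple: $S:=G/\zn(G)$ is a nonabelian simple group satisfying $p^4\leqslant |S|\leqslant|G|\leqslant p^5$ and $|\zn(G)|\leqslant p$, while $\chi(1)=p^2$ is a prime-power degree. The plan is to rule out every such triple $(G,\chi,p)$ by combining the classification in \cite{M01} of prime-power degree irreducible characters of quasisimple groups with a direct estimate in the excluded family $G/\zn(G)=\A_n$, $n\geqslant 18$.

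For the groups covered by \cite{M01} (sporadic groups, covers of $\A_n$ with $5\leqslant n\leqslant 17$, and the Lie-type quasisimple groups), the main step is a traversal of the tabulated prime-power character degrees. The constraints $\chi(1)^2\leqslant |G|\leqslant p^5$ and $|G|\geqslant p^4$ give $|G|^{1/5}\leqslant p\leqslant |G|^{1/4}$, squeezing $p$ into a short range for each fixed quasisimple group. In every listed case one checks that either no irreducible character of the group has degree equal to $p^2$ for a prime $p$ in this range, or that the group order already exceeds $p^5$. The sporadic and small alternating-cover cases are handled from the ATLAS, while for Lie-type groups the tabulated degrees (the Steinberg degree, minimal unipotent degrees, and a handful of exceptional low-rank ones) grow at a rate incompatible with $|G|\leqslant p^5$ once $\chi(1)=p^2$ is imposed.

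For the excluded case $G/\zn(G)=\A_n$ with $n\geqslant 18$, we argue by hand. By Rasala's theorem, the only nontrivial irreducible character degree of $\A_n$ strictly below $n(n-3)/2$ is $n-1$. If $\chi(1)=p^2=n-1$, then $n=p^2+1\geqslant 26$, and $|G|\geqslant |\A_{p^2+1}|=(p^2+1)!/2$ dwarfs $p^5$. Otherwise $p^2\geqslant n(n-3)/2$, so $p^5\geqslant (n(n-3)/2)^{5/2}$; but $|G|\geqslant n!/2$ is vastly larger than this bound for $n\geqslant 18$, again contradicting $|G|\leqslant p^5$. The analogous estimate using the standard lower bound $2^{\lfloor (n-2)/2\rfloor}$ on the spin-character degrees of $2.\A_n$ dispatches the double cover.

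The only real obstacle is the bookkeeping in the traversal of \cite{M01}, especially for the Lie-type families where several shapes of prime-power degree coexist. However, the extreme tightness of the window $p^4\leqslant |G|\leqslant p^5$ with $\chi(1)=p^2$ makes each case short once the relevant entry of the classification is identified, and no new ideas are required beyond careful enumeration.
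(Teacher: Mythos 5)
Your overall strategy coincides with the paper's: reduce to the quasisimple case via the preceding lemma, treat the alternating groups separately, and dispose of everything else using the classification of prime-power degree characters of quasisimple groups in \cite{M01}. The problem is that the ``traversal'' you defer to ``careful enumeration'' \emph{is} the proof. The paper's argument consists almost entirely of that enumeration (its Cases 1--6), and each case needs a specific chain of inequalities extracted from the relevant degree formula: for instance, for \(\bar{G}=\PSL_n(q)\) with \(p^2=(q^n-1)/(q-1)\) one uses \(q^n> p^2(q-1)\) to get \(|\bar{G}|\geqslant q^n(q^n-1)(q^{n-1}-1)> p^4(q-1)^2(q^{n-1}-1)> p^6/4> p^5\), and the unitary and symplectic families require their own (different) manipulations. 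Asserting that ``the tabulated degrees grow at a rate incompatible with \(|G|\leqslant p^5\) once \(\chi(1)=p^2\) is imposed'' restates the theorem rather than proving it; also, the cases in \cite{M01} are organized by degree formulas such as \((q^n-1)/(q-1)\) and \((q^n\pm 1)/2\), not by ``Steinberg and minimal unipotent degrees,'' so the description of what must be checked is itself off. None of the actual estimates appear in your write-up, so the substantive content of the theorem is missing.

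There is also a local error in your treatment of the double cover of \(\A_n\) for \(n\geqslant 18\). The lower bound \(\chi(1)\geqslant 2^{\lfloor (n-2)/2\rfloor}\) on spin degrees bounds \(p\) from \emph{below}, which is the wrong direction: to contradict \(|G|\leqslant p^5\) you need \(|G|> p^5\), i.e.\ an upper bound on \(p\) relative to \(|G|\), and the inequalities \(p^2\geqslant 2^{\lfloor(n-2)/2\rfloor}\) and \(p^4\leqslant n!\leqslant p^5\) are mutually consistent for \(n\geqslant 18\), so no contradiction results. The paper sidesteps all of this with a one-line argument covering every cover of every alternating group simultaneously: since \(|\zn(G)|\leqslant p\), the degree \(p^2\) forces \(p^2\) to divide \(|\bar{G}|=n!/2\), hence \(n\geqslant 2p\) and \(|G|\geqslant (2p)!/2> p^5\). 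Your Rasala-based argument for the characters factoring through \(\A_n\) is correct but both longer and less complete than this; I would replace the whole alternating discussion by the divisibility argument and then actually carry out the Lie-type estimates.
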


\begin{proof}
We assume that $G$ is semisimple and we suppose first that \(\bar{G}=G/\zn(G)\) is an alternating group of order \(n!/2\). Since \(|\zn(G)|\ls p\) by the proof of the previous lemma,  \(p^2\) divides \(|\bar{G}|=n!/2\) and we have \(n\geqslant 2p\). But \(p^5\ls (2p)!/(2p-6)!\leqslant (2p)!/2\) for \(p\g 3\), so $\bar{G}$ cannot be an alternating group. 

We can then apply the main result of \cite{M01} and, since we are looking for characters of degree \(p^2\), we can immediately exclude Cases 9--12 and 12--17. Moreover, the previous paragraph eliminates Cases 7, 8 and 13,  so it remains to study Cases 1--6. 

\medskip

\noindent \bf{Case 1.} Checking the orders of the finite simple groups, we see that the only simple group of Lie type of characteristic \(p\g 3\) with \(|G|_p = p^2\) is \(\PSL_2(p^2)\). Since \(|\PSL_2(p^2)|=p^2(p^2-1)(p^2+1)/2\g p^6/4\g p^5\), this case is not possible.

\medskip

\noindent \bf{Case 2.} \(\bar{G}=\PSL_2(q)\) and \(p^2=q\pm 1\), or $q$ is odd and \(p^2=(q\pm 1)/2\). In the first case \(q\g 2\) and \(q\g q-1 \geqslant p^2/2\), so 
\[|\bar{G}| = q(q-1)(q+1)\g p^6/4\g p^5,\]
which is false. In the second case, \(q-1\geqslant (q+1)/2\geqslant p^2\), so
\[|\bar{G}| = q(q-1)(q+1)/2 \g p^6   \]
and this case is not possible.

\medskip

\noindent \bf{Case 3.} \(\bar{G}=\PSL_n(q)\), \(q,n\g 2\) with \((n,q-1)=1\) and \(p^2=(q^n-1)/(q-1)\). Then \(q^n\g p^2(q-1)\g p^2\) and 
\begin{align*}
|\bar{G}|& =q^{n(n-1)/2} \prod_{i=2}^{n} (q^{i}-1) \\
&\geqslant q^n (q^n-1)(q^{n-1}-1)\\
& \g p^4 (q-1)^2 (q^{n-1} - 1)\\
&\g p^4 (q/2)^2 (q^{n-1}/2 )  = p^4q^n/4  \g p^6/4\g p^5
\end{align*}
where we have used that \(q-1\g q/2\g 1\). This is a contradiction. 

\medskip

\noindent \bf{Case 4.} \(\bar{G}=\te{PSU}_n(q)\), \(n\g 2\) and \(p^2=(q^n+1)/(q+1)\). Then \(q^{n-1} \g p^2\) and 
\begin{align*}
|\bar{G}| = \frac{q^{n(n-1)/2}}{(n,q+1)} \prod_{i=1}^n (q^i - (-1)^i) & \geqslant (q^{n-1}/2)(q^n+1)(q^{n-1}-1) \g p^6/2,
\end{align*}
where we have used that \(1/(n,q+1)\geqslant 1/(2q)\). This is a contradiction. 

\medskip

\noindent \bf{Cases 5,6.} \(\bar{G}=\te{PSp}_{2n}(q)\), \(n\g 1\) and \(p^2 = (q^n\pm 1)/2\). Then \(q^n-1\g p^2\) and 
\[ |\bar{G}| \geqslant \frac{q^{n^2}}{(2,q-1)} \prod_{i=1}^n (q^{2i}-1)\geqslant q^n (q^{2n}-1)=q^n(q^n+1)(q^n-1)\g p^6,\]
where we have used that \(q^{n^2}/(2,q-1)\geqslant q^{2n}/(2,q-1)\geqslant q^n\), a contradiction. 
\end{proof}

\medskip

The groups of minimal order with an irreducible character of degree \(pq\), where \(p\) and $q$ are distinct primes, will be treated elsewhere.

\printbibliography

\end{document}